\documentclass[reqno]{amsart}
\usepackage{amsfonts}
\usepackage{eurosym}
\usepackage{hyperref}
\usepackage{cases}
\usepackage{amsmath,amscd,commath}

\setcounter{MaxMatrixCols}{10}

\AtBeginDocument{\vspace{8mm}}

\begin{document}
\title[\hfil criterion of compactness]
{A criterion of compactness in the space of fuzzy numbers and applications}
\author[T. M. Thuyet, D. Huy. Hoang, P. T. Son, H. Q. Duc]
{Tran Minh Thuyet, Do Huy Hoang, Pham Thanh Son, Ho Quang Duc}
\address{Tran Minh Thuyet, Do Huy Hoang, Pham Thanh Son\\
University of Economics of Ho Chi Minh City, 59C, Nguyen Dinh Chieu Str, District 3, Ho Chi Minh City, Vietnam}
\email{tmthuyet@ueh.edu.vn, thanhsonpham27@gmail.com}

\address{Ho Quang Duc \\
High School of Vinh Kim, Chau Thanh, Tien Giang Province}
\email{hoquangductg@gmail.com}
\keywords{Ascoli - Arzela type theorem, Schauder fixed point theorem, fuzzy
intergral equation, semilinear metric space, criterion of compactness}

\begin{abstract}
We propose a simple criterion of compactness in the space of fuzzy number on
the space of finite dimension and apply to deal with a class of fuzzy
intergral equations in the best condition.
\end{abstract}

\maketitle

\numberwithin{equation}{section} \newtheorem{theorem}{Theorem}[section] %
\newtheorem{lemma}[theorem]{Lemma} \newtheorem{remark}[theorem]{Remark} %
\newtheorem{definition}[theorem]{Definition} %
\newtheorem{proposition}[theorem]{Proposition} %
\newtheorem{example}[theorem]{Example} \allowdisplaybreaks

\section{\textbf{Introduction}}

In recent decades, we have seen that the theory about fuzzy logic and fuzzy
mathematics have strongly developed and more and more widely penetrated
into many fields of application science such as: decision making, fuzzy
control, neural networks, data analysis, risk assessment, optimization and
transportation,...

A direction contributing to the development of the fuzzy mathematics is the
research on the existence, uniqueness and properties of solution of fuzzy
differential, integral equations, see for instance \cite{HM}. There, the
core tools are fixed point theorems such as contraction principal, Schauder
theorem see for example Ravi P Agarwal et al. \cite{RS}, A Khastan et al. \cite{AJ}
etc.

In \cite{RS}, Ravi P Agarwal et al presented the result for the existence
of solution of fuzzy integral equation 
\begin{equation}\label{eq1.1}
y(t) = {y_0}(t) + \frac{1}{{\Gamma (q)}}\int\limits_0^t {{{(t - s)}^{q - 1}}%
f(s,y(s))ds},
\end{equation}
by using Schauder fixed point theorem in the semilinear metric space. The
main result is stated in Theorem 4.2 with hypothesis 
\begin{equation*}
f:[0;{a^*}] \times A \to \mathbb{E}_c^n\,\, \text{is continuous and compact,
where}\,\, A \subset \mathbb{E}_c^n\,\, \text{is a bounded set}.
\end{equation*}
The authors proved that the operator $T:\Omega \to \Omega \subset C([0,a],%
\mathbb{E}_c^n)$ defined by 
\begin{equation*}
Ty(t) = {y_0}(t) + \frac{1}{{\Gamma (q)}}\int\limits_0^t {{{(t - s)}^{q - 1}}%
f(s,y(s))ds},
\end{equation*}
is continuous and compact.

However, according to our group's opinion, this paper contains three
problems to be necessarily discussed.

\begin{enumerate}
\item In the step of proving continuity of operator $T$ is continuous, we think that: It is necessary to add the hypothesis of uniform
continuity of the function $f$.

\item Besides, in the step of proving the level-equicontinuity of $T(\Omega
)(t)$. Authors used the level-equicontinuity of $y_0(t)$. While
this property is not deduced from the hypothesis which is only given as that ${%
y_0}([0,{a^*}])$ is compact-supported.

\item In the proof of the compact-supported property of $T(\Omega )(t)$ (the
end of page 10). Authors argued that: by using Theorem 2.2 and the relative
compact property of $f([0,{a^*}] \times \Omega )$ we deduce the
compact-supported property of $f([0,{a^*}] \times \Omega )$ . This argument
is not true with Theorem 2.2 only stating that: If $A$ is a compact-supported subset of $%
\mathbb{E}_c^n$, then the two following statements are equivalent.
\end{enumerate}
\begin{itemize}
\item $A$ is a relative compact subset of $\mathbb{E}_c^n,$

\item $A$ is level-equicontinuous on [0,1].
\end{itemize}

Theorem 2.2 did not assert that: The compact-supported property is a consequence of the
relative compact property.

Maybe authors A. Khanstan et al have cared for the shortcomings. So
in \cite{AJ}, they investigated the existence of solution of fuzzy integral
equation: 
\begin{equation}\label{eq1.2}
u(t) = \frac{1}{{\Gamma (q)}}\int\limits_0^t {{{(t - s)}^{q - 1}}f(s,u(s))ds}%
,
\end{equation}
which similar and simpler than \eqref{eq1.1}.

Besides, Theorem 2.2 of paper \cite{RS} is recalled in the label "Theorem 2.3"
 and right below they had a remark that: in fact, the following two
statements

\begin{itemize}
\item $A$ is relative compact subset in $\mathbb{R}_F^c$,

\item $A$ is compact-supported in $\mathbb{R}_F^c$ and level-equicontinuous
on [0,1],
\end{itemize}

are equivalent.

This remark could deal with the third shortcomings in paper \cite{RS}.
However, they did not say that it was cited from which resources?

Next, the authors in \cite{AJ} also claimed that: the operator $\mathcal{T}$
defined by 
\begin{equation*}
\mathcal{T}u(t) = \frac{1}{{\Gamma (q)}}\int\limits_0^t {{{(t - s)}^{q - 1}}f(s,u(s))ds}
\end{equation*}
is continuous and compact. 

As for the detail, they used a different technique from one in 
\cite{RS}. The operator $\mathcal{T}$ is decomposed into two operators $\mathcal{A}$ and $\mathcal{N}$ i.e, 
$\mathcal{T}=\mathcal{A}\circ \mathcal{N}$, where 
\begin{equation*}
{\mathcal{A}}v(t)=\frac{1}{{\Gamma (q)}}\mathop\int \limits_{0}^{t}{%
(t-s)^{q-1}}{s^{-r}}v(s){\mkern1mu}ds,
\end{equation*}%
and 
\begin{equation*}
\mathcal{N}u(t)={f_{r}}(t,u(t))\,\,\,\text{with}\,\,\,{f_{r}}(t,x)={t^{r}}%
f(t,x)\,\,\,\text{for some}\,\,\,r\in (0,q).
\end{equation*}

To prove the continuity and compactness of $\ \mathcal{T}$, they made the
following assumptions (presented in Theorem 3.13)

\begin{itemize}
\item[a)] $f:(0,1] \times \mathbb{R}_F^c \to \mathbb{R}_F^c$ is continuous,

\item[b)] ${f_r}:[0,1] \times \mathbb{R}_F^c \to \mathbb{R}_F^c$ is compact
and uniformly continuous.
\end{itemize}

The strong point of paper \cite{AJ} is the fact that they overcame all
shortcomings of paper \cite{RS}. Further, the result of the existence of
solution is still true in the case that the function $f$ admits a singular at $%
t=0$.

However to overcome the first shortcoming of \cite{RS}, the authors in \cite%
{AJ} must use the hypothesis of uniform continuity of the function $f_r$.

Another observation is the fact that both paper \cite{RS} and \cite{AJ} applied the
result of Theorem 2.2 (stated in \cite{RS}). This theorem relates to a quite
complex concept that is the compact - supported concept.

From the above motivation, in the present paper we wish to propose one simple
approach without using compact - supported concept as well as uniform
continuity of the function $f$ but we still obtain the result of the
existence of solution for equation (1.1).

The layout of this paper is organiged as follows: In Section 2, some
necessary results and concepts are recalled. Specially, we use Section 3 to
present a convenient criterion of compactness in the space of fuzzy number $%
\mathbb{R}_c^F$. Section 4 is used to prove an existence result by using the
above criterion. Finally, in Section 5, we extend the existence result for
the space of fuzzy numbers $\mathbb{E}_c^n$.

\section{\textbf{Preliminaries}}

In this section, we give some definitions and introduce the necessary
notations which will be used throughout this paper.

Let us denote $K_c(\mathbb{R}^n)$ as the family of all nonempty, compact and
convex subsets of $\mathbb{R}^n$. In $K_c(\mathbb{R}^n)$ we define

\begin{itemize}
\item[i)] $A + B = \{ a + b:a \in A,\,\,b \in B\},$

\item[ii)] $\lambda A = \{\lambda a : a\in A\},$ 
\end{itemize}
for all $A, B \in K_c(\mathbb{R}^n),\,\, \lambda \in \mathbb{R}$.

The distance between $A$ and $B$ is defined by the Hausdorff-Pompeiu metric 
\begin{equation*}
{d_H}(A,B) = \max \left\{ {\mathop {\sup }\limits_{x \in A} \mathop {\inf }%
\limits_{y \in B} \parallel x - y\parallel ,\mathop {\sup }\limits_{y \in B} %
\mathop {\inf }\limits_{x \in A} \parallel x - y\parallel } \right\}.
\end{equation*}
$K_c(\mathbb{R}^n)$ is a complete and separable metric space with respect to
the Hausdorff - Pompeiu metric (see \cite{LG}).

In the following, we give some basic notions and results on fuzzy set
theory. We denote by $\mathbb{E}^n$ the space of all fuzzy numbers in $\mathbb{R
}^n$, that is, $\mathbb{E}^n$ is the space of all functions $u:\mathbb{R}^n\to [0,1]$, 
satisfying the following properties (see for example \cite{BB})

\begin{itemize}
\item[(i)] $u$ is normal, i.e. $\exists t_{0} \in\mathbb{R}$ for which $u(t_{0})=1$,
\item[(ii)] $u$ is fuzzy convex, i.e. 
\begin{equation*}
u \bigl(\lambda t_1 + (1-\lambda) t_2 \bigr) \geq\min \{ u({t_1}),u({t_2})\}\quad \text{for any } t_1, t_2\in\mathbb{R}^n, \text{ and } \lambda\in[0,1],
\end{equation*}
\item[(iii)] $u$ is upper semi-continuous,
\item[(iv)] $cl(\text{supp}u)$ is compact.
\end{itemize}

The fuzzy null set is defined by 
\begin{equation*}
0(t) = \left\{ 
\begin{array}{l}
0,\quad t \ne 0, \\ 
1,\quad t = 0.%
\end{array}
\right.
\end{equation*}

If $u\in \mathbb{E}^n$, then the set 
\begin{equation*}
[u]^\alpha = \left\{ {t \in \mathbb{R}^n | u(t) \ge \alpha }
\right\},\;\;\,\alpha \in (0,1],
\end{equation*}
\begin{equation*}
{[u]^0} = \overline {\left\{ {t \in \mathbb{R}^n | u(t) > 0} \right\}} =
cl\left\{ {t \in \mathbb{R}^n | u(t) > 0} \right\}
\end{equation*}
is called the $\alpha$ - level set of $u$. From the definition of $\mathbb{E}%
^n$, we can prove that 
\begin{equation*}
\forall u\in \mathbb{E}^n, [u]^\alpha \in K_c(\mathbb{R}^n),\,\,\, \text{for
every}\,\,\, \alpha \in [0,1].
\end{equation*}
We usually denote $\mathbb{E}^1$ by $\mathbb{R}_F$ and if $u\in \mathbb{E}^1$%
, we denote 
\begin{equation*}
{[u]^\alpha } = [u_\alpha ^ - ,u_\alpha ^ + ],\;\;\,\text{for every}%
\,\,\,\alpha \in [0,1].
\end{equation*}

According to Zadeh's extension principle, we have the addition and the
scalar multiplication in fuzzy-number space $\mathbb{E}^n$ as usual. It is
well known that 
\begin{equation*}
[u+v ]^{\alpha} = [u ]^{\alpha} + [v ]^{\alpha} , \qquad [k u ]^{\alpha} = k
[u ]^{\alpha},\,\,\,\forall \alpha\in [0,1], \,\,\, k\in \mathbb{R}.
\end{equation*}

The metric in $\mathbb{E}^n$ is defined by 
\begin{equation*}
D(u,v) = \sup_{\alpha\in [0,1]} d_H \bigl( \lbrack u]^{\alpha} , [v]^{\alpha
} \bigr),
\end{equation*}
we have the following properties (see \cite{LM}):

\begin{itemize}
\item[i)] $(\mathbb{E}^n,D )$ is a complete metric space,

\item[ii)] $D(u+w,v+w) = D(u,v)$,

\item[iii)] $D(\lambda u,\lambda v) = \vert \lambda\vert D(u,v)$,

\item[iv)] $D(\lambda u,\mu u) = |\lambda - \mu |D(u, 0)$,

\item[v)] $D(u+w,v+t) \leq D(u,v)+ D(w,t)$,
\end{itemize}

for all $u,v,w,t\in \mathbb{E}^{n}$ and $\lambda ,\mu \in \mathbb{R}$.\\

Let $T=[a,b]\subset \mathbb{R}$ be a compact interval.

\begin{definition}\rm
    A mapping \(F:T\rightarrow \mathbb{R}_F\) is \textit{strongly measurable} if, for all \(\alpha \in[0,1]\) the set-valued function 
\(F_{\alpha}:T\rightarrow K_c(\mathbb{R}^n)\) defined by the following:
$$F_{\alpha}(t)=[F(t)]^{\alpha},\,\,\,\,t\in T,$$ 
is Lebesgue measurable.\\

A mapping \(F:T\rightarrow \mathbb{R}_F\) is called \textit{integrably bounded} if there exists an integrable function $k:T\to \mathbb{R}_+$ such that
 \(D(F_0(t),0)\leq k(t)\) for all \(t\in T\).
\end{definition}%
\begin{definition}\rm
    Let \(F:T\rightarrow \mathbb{R}_F\). The integral of $F$ over $T$, denoted by \(\int_{T}F(t)\,dt\), is defined by 
the following expression
\begin{equation*}
\begin{split}
\biggl[\int_{T}F(t)\,dt \biggr]^{\alpha}&{} = \int _{T}F_{\alpha}(t)\,dt\\
&{}= \biggl\{ \int_{T}f(t)\,dt  | f:T\rightarrow\mathbb{R} \text{ is a measurable selection for } 
F_{\alpha} \biggr\},\quad \alpha\in\,[0,1].
\end{split}
\end{equation*}
\end{definition}
A strongly measurable and integrably bounded mapping $%
F:T\rightarrow E$ is said to be \textit{integrable} over $T$ if $\int_{T}F(t)\,dt\in \mathbb{R}_F$.  

\begin{proposition}
If \(F:T\rightarrow \mathbb{R}_F\) is continuous, then $F$ is integrable.
\end{proposition}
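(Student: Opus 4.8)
The plan is to unpack the definition: ``$F$ integrable'' means that $F$ is strongly measurable, that $F$ is integrably bounded, and that the level-wise Aumann integral $\int_T F(t)\,dt$ lies in $\mathbb{R}_F$. I would dispose of the first two requirements quickly and concentrate the real work on the third.

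For strong measurability, observe that from $D(u,v)=\sup_{\alpha}d_H([u]^\alpha,[v]^\alpha)$ one has $d_H\bigl([F(t)]^\alpha,[F(s)]^\alpha\bigr)\le D\bigl(F(t),F(s)\bigr)$ for every $\alpha$; hence continuity of $F$ makes each $F_\alpha:T\to K_c(\mathbb{R}^n)$ Hausdorff continuous, and a continuous map into the separable metric space $\bigl(K_c(\mathbb{R}^n),d_H\bigr)$ is Borel, hence Lebesgue, measurable (the various notions of measurability coincide for compact-valued multifunctions). For integrable boundedness, $F(T)$ is compact in $(\mathbb{R}_F,D)$ as the continuous image of a compact set, so $M:=\sup_{t\in T}D(F(t),0)<\infty$; since $d_H\bigl(F_0(t),\{0\}\bigr)\le D(F(t),0)\le M$ and the constant $k\equiv M$ is integrable on $[a,b]$, $F$ is integrably bounded.

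The core step is $\int_T F(t)\,dt\in\mathbb{R}_F$, and I would approach it by uniform approximation by step maps. Since $T$ is compact, $F$ is uniformly continuous, so for each $n$ there are a partition $a=\tau_0<\dots<\tau_{m_n}=b$ and tags $\xi_i\in[\tau_{i-1},\tau_i]$ with $D\bigl(F(t),F(\xi_i)\bigr)<1/n$ on $[\tau_{i-1},\tau_i]$; let $F_n$ be the associated step map and $S_n:=\sum_i(\tau_i-\tau_{i-1})\,F(\xi_i)$. Each $S_n$ is a finite nonnegative combination of fuzzy numbers, so $S_n\in\mathbb{R}_F$, and $[S_n]^\alpha=\sum_i(\tau_i-\tau_{i-1})[F(\xi_i)]^\alpha=\int_T[F_n(t)]^\alpha\,dt$ for all $\alpha$. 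Combining the standard inequality $d_H\bigl(\int_T G_\alpha,\int_T H_\alpha\bigr)\le\int_T d_H\bigl(G_\alpha(t),H_\alpha(t)\bigr)\,dt$ with $d_H\le D$ level-wise gives $D(S_n,S_{n'})\le\int_T D\bigl(F_n(t),F_{n'}(t)\bigr)\,dt\le(b-a)(\tfrac1n+\tfrac1{n'})\to0$, so $(S_n)$ is Cauchy and, by completeness of $(\mathbb{R}_F,D)$, converges to some $G\in\mathbb{R}_F$. Finally, for each $\alpha$ one has $d_H\bigl([S_n]^\alpha,[G]^\alpha\bigr)\le D(S_n,G)\to0$ and, simultaneously, $d_H\bigl([S_n]^\alpha,\int_T F_\alpha(t)\,dt\bigr)\le\int_T D\bigl(F_n(t),F(t)\bigr)\,dt\to0$, whence $[G]^\alpha=\int_T F_\alpha(t)\,dt$; that is, $\int_T F(t)\,dt=G\in\mathbb{R}_F$.

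The delicate point is precisely the last one. One must know that each set-valued integral $\int_T F_\alpha(t)\,dt$ is meaningful — nonempty by the measurable selection theorem applied to the measurable, integrably bounded multifunction $F_\alpha$, and compact and convex by the classical properties of Aumann integrals (the integrable bound yields boundedness and closedness, and Lyapunov's convexity theorem yields convexity) — and, above all, that these sets stack into a bona fide fuzzy number. The approximation argument is what delivers this, since it identifies the family $\{\int_T F_\alpha(t)\,dt\}_{\alpha}$ as the level sets of the genuine fuzzy number $G$, bypassing a direct verification of the nestedness and left-continuity-in-$\alpha$ conditions of the Negoita--Ralescu representation. (In the scalar case $\mathbb{R}_F$ one could alternatively verify those conditions by hand: writing $[F(t)]^\alpha=[F_\alpha^-(t),F_\alpha^+(t)]$, the endpoint functions $\alpha\mapsto\int_T F_\alpha^{\pm}(t)\,dt$ inherit monotonicity from $F_\alpha^{\pm}$ and left-continuity from the dominated convergence theorem, with domination by the integrable function $k$.)
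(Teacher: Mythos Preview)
The paper does not supply a proof of this proposition; it is stated in the preliminaries as a known fact, with the surrounding material attributed to Kaleva \cite{KO}. There is therefore nothing to compare against.

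Your argument is correct. Continuity of $F$ in the supremum metric $D$ immediately gives Hausdorff continuity, hence measurability, of each $F_\alpha$, and compactness of $F(T)$ gives a constant integrable bound. Your main step---approximating $F$ uniformly by step maps, forming the Riemann-type sums $S_n\in\mathbb{R}_F$, and using completeness of $(\mathbb{R}_F,D)$ to produce a limit $G$ whose $\alpha$-levels coincide with the Aumann integrals $\int_T F_\alpha$---is the standard route and is carried out cleanly. The one place to be slightly more explicit is the inequality $D(S_n,S_{n'})\le\int_T D(F_n(t),F_{n'}(t))\,dt$: you are implicitly using that $[S_n]^\alpha=\int_T [F_n(t)]^\alpha\,dt$ together with the set-valued estimate $d_H(\int G_\alpha,\int H_\alpha)\le\int d_H(G_\alpha,H_\alpha)$ and then taking the supremum over $\alpha$; this is fine, but it is worth saying once that the supremum passes through because the bound $\int_T D(F_n,F_{n'})$ is already uniform in $\alpha$. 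The parenthetical alternative you sketch at the end---checking the stacking conditions directly on the endpoint integrals $\int_T F_\alpha^{\pm}$ via monotonicity and dominated convergence---is also valid and is in fact the shorter path in the scalar case $\mathbb{R}_F$.
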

\begin{theorem}(see \cite{KO})
Let $F, G:T\to \mathbb{R}_F$ be integrable and $\lambda\in \mathbb{R}$. Then
\begin{enumerate}
\item [i)] $\int\limits_T {(F + G)}  = \int\limits_T F  + \int\limits_T G $,
\item [ii)] $\int \limits_T \lambda F = \lambda \int \limits_T F$,
\item [iii)] $D(F,G)$ is integrable on $I$,
\item [iv)] $D(\int \limits_T F,\int \limits_T G) \le \int \limits_T D(F,G)$.
\end{enumerate}
\end{theorem}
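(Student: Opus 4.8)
The plan is to reduce every assertion to the level sets: by Definition 2.2 the fuzzy integral is, level-wise, the Aumann integral of the set-valued maps $F_\alpha,G_\alpha:T\to K_c(\mathbb R^n)$, so I would combine the classical theory of integrals of measurable multifunctions with the fact that a fuzzy number is uniquely determined by its family of level sets $\{[u]^\alpha\}_{\alpha\in[0,1]}$.

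For (i) and (ii), first note that $F+G$ and $\lambda F$ are again strongly measurable and integrably bounded: $(F+G)_\alpha=F_\alpha+G_\alpha$ and $(\lambda F)_\alpha=\lambda F_\alpha$ are measurable set-valued maps, and $D\bigl((F+G)_0(t),0\bigr)\le k_F(t)+k_G(t)$ is an integrable bound. The substance is the set identity $\int_T(F_\alpha+G_\alpha)=\int_T F_\alpha+\int_T G_\alpha$. The inclusion $\supseteq$ is immediate, since $f+g$ is a measurable selection of $F_\alpha+G_\alpha$ whenever $f,g$ are measurable selections of $F_\alpha,G_\alpha$; the reverse inclusion is a \emph{measurable selection} argument: any measurable selection $h$ of $F_\alpha+G_\alpha$ decomposes as $h=f+g$ with $f,g$ measurable selections of $F_\alpha,G_\alpha$, obtained by applying the Kuratowski--Ryll-Nardzewski / Aumann selection theorem to $t\mapsto\{(x,y)\in F_\alpha(t)\times G_\alpha(t):x+y=h(t)\}$. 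The scalar case is easier ($\lambda\neq0$: selections of $\lambda F_\alpha$ are exactly $\lambda f$; $\lambda=0$ is trivial). Integrating, and using $[u+v]^\alpha=[u]^\alpha+[v]^\alpha$, $[ku]^\alpha=k[u]^\alpha$, the claimed identities of fuzzy numbers follow from level-set uniqueness.

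For (iii), for each fixed $\alpha$ the function $t\mapsto d_H\bigl(F_\alpha(t),G_\alpha(t)\bigr)$ is measurable (a standard property of measurable multifunctions) and is dominated by $D(F_0(t),0)+D(G_0(t),0)\le k_F(t)+k_G(t)$. Since $D(F(t),G(t))=\sup_{\alpha\in[0,1]}d_H\bigl(F_\alpha(t),G_\alpha(t)\bigr)$, I would use the left-continuity of $\alpha\mapsto[u]^\alpha$ on $(0,1]$ --- along which nested compacta converge in $d_H$ --- to replace this supremum by a supremum over the countable set $\bigl(\mathbb Q\cap(0,1]\bigr)\cup\{1\}$, so that $D(F(\cdot),G(\cdot))$ is measurable as a countable supremum of measurable functions, and integrable by the domination just noted.

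For (iv), the key lemma is that for integrably bounded measurable $\Phi,\Psi:T\to K_c(\mathbb R^n)$ one has $d_H\bigl(\int_T\Phi,\int_T\Psi\bigr)\le\int_T d_H\bigl(\Phi(t),\Psi(t)\bigr)\,dt$: given a measurable selection $f$ of $\Phi$, the Filippov selection theorem produces a measurable selection $g$ of $\Psi$ with $\|f(t)-g(t)\|\le d_H(\Phi(t),\Psi(t))$ a.e., whence $\int_T g\in\int_T\Psi$ and $\bigl\|\int_T f-\int_T g\bigr\|\le\int_T d_H(\Phi,\Psi)$; taking suprema over selections and symmetrizing gives the lemma. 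Applying it with $\Phi=F_\alpha$, $\Psi=G_\alpha$ and using $d_H\bigl(F_\alpha(t),G_\alpha(t)\bigr)\le D(F(t),G(t))$ yields $d_H\bigl(\int_T F_\alpha,\int_T G_\alpha\bigr)\le\int_T D(F,G)$ for all $\alpha$; the supremum over $\alpha$ is then $\le\int_T D(F,G)$, which is (iv). I expect the measurable-selection steps --- the decomposition $h=f+g$ in (i) and the near-optimal (Filippov) selection in (iv) --- to be the genuine obstacle; the countable-supremum reduction in (iii) is the one other place needing a small argument about $d_H$ along nested level sets.
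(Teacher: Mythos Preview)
The paper does not prove this theorem at all: it is listed among the preliminaries with the citation ``(see \cite{KO})'' to Kaleva's paper, and no argument is supplied. There is therefore no in-paper proof to compare your proposal against.

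That said, your sketch is the standard route and is essentially what one finds in Kaleva and the subsequent literature: reduce to $\alpha$-levels, use Aumann integration of the set-valued maps $F_\alpha,G_\alpha$, invoke measurable-selection theorems (Kuratowski--Ryll-Nardzewski for the additive decomposition, a Filippov-type nearest selection for the Hausdorff estimate), and handle measurability of $t\mapsto D(F(t),G(t))$ via a countable supremum using left-continuity of $\alpha\mapsto[u]^\alpha$. One small remark: here $F,G$ take values in $\mathbb{R}_F=\mathbb{E}^1$, so the level sets are compact intervals and all the selection arguments become elementary (endpoints give canonical selections, and the Hausdorff distance reduces to $\max\{|u_\alpha^--v_\alpha^-|,|u_\alpha^+-v_\alpha^+|\}$); you do not actually need the full strength of Filippov or Aumann decomposition in this one-dimensional setting, though invoking them does no harm and matches how the cited source treats the general $\mathbb{E}^n$ case.
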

We denote $\mathbb{E}_{c}^{n}$ the space of $u\in \mathbb{E}^{n} $ with the property that the mapping 
$[0,1]\rightarrow K_{c}(\mathbb{R}^{n}),\,\,\alpha \mapsto {[u]^{\alpha }}$ is continuous. It is known that 
$(\mathbb{E}_{c}^{n},D)$ is a complete metric space, $\mathbb{E}_{c}^{1}$ is usually denoted by 
$\mathbb{R}_{F}^{c}$.

The concept of semilinear space and related concepts were already considered,
for instance in \cite{RS}. A semilinear metric space is a semilinear space 
$S$ with a metric $d:S \times S \to R$ which is translation invariant and
positively homogeneous, that is,

\begin{itemize}
\item[i)] $d(a+c,b+c)=d(a,b)$,

\item[ii)] $d(\lambda a,\lambda b) = \lambda d(a,b)$, for all $a, b\in S,
\lambda \ge 0.$
\end{itemize}

If $S$ is a semilinear metric space, then addition and scalar multiplication
on $S$ are continuous. Furthermore, if $S$ is complete, then we say that $S$ is a
semilinear Banach space. We say that a semilinear space $S$ has the
cancellation property if $a+c=b+c$ implies $a=b$ for every $a,b,c \in S$.

It is known that $(\mathbb{E}^n,D ),\,\,(\mathbb{E}_c^n,D ),\,\,C([a,b],%
\mathbb{E}^n),\,\,C([a,b],\mathbb{E}_c^n)$ are semilinear Banach spaces
having the cancellation property.

By using the fact that a semilinear metric space $S$ having cancellation
property can be isometrically embedded into a normed space we can prove
Schauder's fixed point theorem for $S$. 
\begin{theorem}(see \cite{RS})
Let $B$ be a nonempty, closed, bounded and convex subset of a semilinear Banach space $S$ having the cancellation property,
 and suppose that $P:B \to B$ is a compact operator. Then $P$ has at least one fixed point in $B$.
\end{theorem}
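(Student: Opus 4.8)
The plan is to transport the whole situation, via the structure-preserving isometric embedding recalled just above the statement, into an honest Banach space, invoke the classical Schauder fixed point theorem there, and then pull the fixed point back. First I would fix an embedding $j\colon S\to X$ of the semilinear metric space $S$ into a normed space $X$ which is an isometry onto its image and which respects the semilinear operations, i.e. $j(a+b)=j(a)+j(b)$ and $j(\lambda a)=\lambda j(a)$ for all $a,b\in S$ and $\lambda\ge 0$; this is exactly the R\r{a}dstr\"om-type embedding whose existence is guaranteed by the cancellation property together with the translation invariance and positive homogeneity of $d$. Passing to the completion $\widehat X$ of $X$ if necessary, I may assume $\widehat X$ is a Banach space, and since $S$ is complete, $j(S)$ is a complete, hence closed, subset of $\widehat X$.

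Next I would verify that $C:=j(B)$ carries the hypotheses of the classical theorem inside $\widehat X$. Convexity of $C$: for $a,b\in B$ and $\lambda\in[0,1]$ the additivity and positive homogeneity of $j$ give $j(\lambda a+(1-\lambda)b)=\lambda j(a)+(1-\lambda)j(b)$, so $C$ is convex because $B$ is. Boundedness: $j$ is an isometry, hence $\operatorname{diam}(C)=\operatorname{diam}(B)<\infty$. Closedness: $B$ is closed in $S$, so $j(B)$ is closed in $j(S)$, which in turn is closed in $\widehat X$; thus $C$ is closed in $\widehat X$, and it is clearly nonempty. Now set $Q:=j\circ P\circ j^{-1}\colon C\to C$, which is well defined because $P(B)\subseteq B$ and $j$ is injective. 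Since $j$ and $j^{-1}$ are isometries and $P$ is continuous, $Q$ is continuous; since $P(B)$ is relatively compact in $S$ and an isometry maps relatively compact sets to relatively compact sets, $Q(C)=j(P(B))$ is relatively compact in $\widehat X$. Hence $Q$ is a compact operator on the nonempty closed bounded convex subset $C$ of the Banach space $\widehat X$.

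Applying the classical Schauder fixed point theorem in $\widehat X$ yields a point $y\in C$ with $Qy=y$. Writing $y=j(x)$ with $x\in B$, we get $j(x)=y=Qy=j(P(x))$, and injectivity of $j$ forces $x=P(x)$, which is the desired fixed point. The one genuinely substantive ingredient is the embedding step: everything else is a routine check that convexity, boundedness, closedness, continuity and compactness are preserved under an isometric, additive, positively homogeneous map. So the main obstacle — and the reason the cancellation hypothesis is indispensable — is precisely knowing that such a structure-preserving isometry into a normed space exists; granted that (as recalled above), the argument closes without any further difficulty.
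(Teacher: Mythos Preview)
Your proposal is correct and follows exactly the approach the paper indicates: the paper does not give a detailed proof of this theorem but cites \cite{RS}, preceded only by the one-sentence remark that ``a semilinear metric space $S$ having cancellation property can be isometrically embedded into a normed space'' and that this suffices to prove Schauder's theorem for $S$. Your argument is a faithful and complete elaboration of precisely that embedding strategy, so there is nothing to add.
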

\begin{remark}\rm
Here, it is understood that: a compact operator is a continuous operator mapping bounded sets into relative compact sets.
\end{remark}

\section{\textbf{A criterion of compactness in $\mathbb{R}_F^c$}}

In this section we propose a convenient criterion of compactness for $%
\mathbb{R}_F^c$ to deal with a problem in Section 4.

First, we recall two conventional forms of Ascoli - Arzela Theorem. 
\begin{theorem} Let $E$ be a complete metric space and $K$ be a compact metric space. Then a subset $A$ of $C(K,E)$
 is relatively compact \textbf{iff}
\begin{enumerate}
\item [i)] for every $r\in K,\,\,\, A(r)=\{u(r)\,\,|\,\,\,u\in A\}$ is relatively compact in $E$,
\item [ii)] $A$ is equicontinuous.
\end{enumerate}
\end{theorem}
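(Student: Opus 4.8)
The plan is to prove this as the classical Ascoli--Arzel\`a theorem. First I would equip $C(K,E)$ with the uniform metric $\rho(u,v)=\sup_{r\in K}d\bigl(u(r),v(r)\bigr)$, which is finite since $u(K)$ is compact for continuous $u$, and record that $\bigl(C(K,E),\rho\bigr)$ is a complete metric space because $E$ is complete and a uniform limit of continuous maps is continuous. In a complete metric space relative compactness is equivalent to total boundedness, so it suffices to relate total boundedness of $A$ to the conditions (i)--(ii).

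For the necessity direction I would assume $A$ is relatively compact, so $\overline{A}$ is compact. For fixed $r$ the evaluation map $e_r:C(K,E)\to E$, $e_r(u)=u(r)$, is $1$-Lipschitz, hence $e_r(\overline{A})$ is compact and contains $A(r)$, giving (i). For (ii), fix $\varepsilon>0$, cover $\overline{A}$ by finitely many $\rho$-balls of radius $\varepsilon/3$ about $u_1,\dots,u_n$; each $u_i$ is uniformly continuous on the compact space $K$, yielding $\delta_i>0$ with $d(s,t)<\delta_i\Rightarrow d\bigl(u_i(s),u_i(t)\bigr)<\varepsilon/3$. With $\delta=\min_i\delta_i$ and any $u\in A$, picking $u_i$ with $\rho(u,u_i)<\varepsilon/3$ and applying the triangle inequality gives $d\bigl(u(s),u(t)\bigr)<\varepsilon$ whenever $d(s,t)<\delta$, so $A$ is (uniformly) equicontinuous.

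For the sufficiency direction I would assume (i), (ii) and fix $\varepsilon>0$. By equicontinuity, for each $r\in K$ choose $\delta_r>0$ with $d(s,r)<\delta_r\Rightarrow d\bigl(u(s),u(r)\bigr)<\varepsilon/4$ for all $u\in A$; by compactness of $K$ finitely many balls $B(r_1,\delta_{r_1}),\dots,B(r_m,\delta_{r_m})$ cover $K$. The set $\Phi(A)=\{(u(r_1),\dots,u(r_m)):u\in A\}$ lies in $\overline{A(r_1)}\times\cdots\times\overline{A(r_m)}$, which is compact by (i), hence $\Phi(A)$ is totally bounded; so I can pick $u_1,\dots,u_N\in A$ whose images $\Phi(u_k)$ form an $(\varepsilon/4)$-net of $\Phi(A)$ in the max-metric on $E^m$. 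Then for any $u\in A$ there is $k$ with $\max_j d\bigl(u(r_j),u_k(r_j)\bigr)<\varepsilon/4$, and for arbitrary $r\in K$, choosing $j$ with $r\in B(r_j,\delta_{r_j})$,
\[
d\bigl(u(r),u_k(r)\bigr)\le d\bigl(u(r),u(r_j)\bigr)+d\bigl(u(r_j),u_k(r_j)\bigr)+d\bigl(u_k(r_j),u_k(r)\bigr)<\tfrac{3\varepsilon}{4}.
\]
Taking the supremum over $r$ gives $\rho(u,u_k)\le 3\varepsilon/4<\varepsilon$, so $\{u_1,\dots,u_N\}$ is an $\varepsilon$-net for $A$; hence $A$ is totally bounded and, by completeness of $C(K,E)$, relatively compact.

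All steps are routine; the part requiring the most care is the sufficiency direction, namely the bookkeeping with the three $\varepsilon/4$ estimates and the observation that a totally bounded subset of a metric space admits finite $\eta$-nets \emph{consisting of its own points} (applied to $\Phi(A)\subset E^m$), which is exactly what allows the net functions $u_k$ to be taken inside $A$ rather than merely in $C(K,E)$.
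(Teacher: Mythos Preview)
Your proof is correct and follows the standard Ascoli--Arzel\`a argument via total boundedness. Note, however, that the paper does not actually supply a proof of this theorem: it is merely \emph{recalled} as one of ``two conventional forms of Ascoli--Arzela Theorem'' and stated without proof, to be used as a tool for Theorems~3.3 and~5.1. So there is no proof in the paper to compare against; your write-up would serve as a self-contained justification where the paper simply cites the classical result.
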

In particular for $E=\mathbb{R}^m$ with $m\in \mathbb{N}$, we have the
following version of Ascoli - Arzela theorem 
\begin{theorem} A subset $A$ of $C(K,\mathbb{R}^m)$ is relatively compact \textbf{iff}
\begin{enumerate}
\item [i)] $A$ is uniformly bounded,\\
(i.e. there is $M>0$ such that $|u(r)|\le M$ for every $(r,u)\in [0,1]\times A$)
\item [ii)] $A$ is equicontinuous.
\end{enumerate}
\end{theorem}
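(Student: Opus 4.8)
The plan is to obtain this as a specialization of Theorem 3.1 to the target space $E=\mathbb{R}^{m}$, using two elementary facts: first, $\mathbb{R}^{m}$ is a complete metric space, so Theorem 3.1 is applicable with $K$ the given compact metric space and $E=\mathbb{R}^{m}$; second, by the Heine--Borel theorem a subset of $\mathbb{R}^{m}$ is relatively compact if and only if it is bounded. With these in hand, the whole point is just to trade condition (i) of Theorem 3.1 (relative compactness of every evaluation set $A(r)=\{u(r): u\in A\}$) for the single global hypothesis that $A$ is uniformly bounded.

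For the implication $(\Leftarrow)$, I would assume $A$ uniformly bounded and equicontinuous, fix $M>0$ with $|u(r)|\le M$ for all $(r,u)\in K\times A$, and observe that for each fixed $r\in K$ the set $A(r)$ lies inside the closed ball of radius $M$ centered at the origin in $\mathbb{R}^{m}$, which is compact; hence $A(r)$ is relatively compact. Thus conditions (i) and (ii) of Theorem 3.1 hold, and Theorem 3.1 yields that $A$ is relatively compact in $C(K,\mathbb{R}^{m})$.

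For the implication $(\Rightarrow)$, I would assume $A$ relatively compact in $C(K,\mathbb{R}^{m})$. Condition (ii), equicontinuity, is then immediate from Theorem 3.1. Condition (i) follows from the general fact that a relatively compact subset of a metric space is bounded: since the metric on $C(K,\mathbb{R}^{m})$ is the one induced by the supremum norm, boundedness of $A$ means exactly that there is $M>0$ with $\sup_{r\in K}|u(r)|\le M$ for every $u\in A$, i.e. $A$ is uniformly bounded. (Alternatively one can invoke Theorem 3.1 to get relative compactness of each $A(r)$ and then combine pointwise boundedness with equicontinuity over the compact space $K$ to recover uniform boundedness, but the direct boundedness argument is shorter.)

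I do not expect a genuine obstacle here, since the statement is a routine corollary of the general Ascoli--Arzel\`a theorem; the only points deserving a word of care are checking that $\mathbb{R}^{m}$ is complete so that Theorem 3.1 applies, and being explicit that ``uniformly bounded'' in the sense of the statement coincides with metric boundedness in the supremum norm on $C(K,\mathbb{R}^{m})$.
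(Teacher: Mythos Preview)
Your proposal is correct and matches the paper's approach: the paper does not give a separate proof but simply presents Theorem~3.2 as the specialization of Theorem~3.1 to $E=\mathbb{R}^m$, which is exactly the reduction you carry out via Heine--Borel. Your write-up is in fact more detailed than the paper, and you also silently correct the paper's typo ``$(r,u)\in[0,1]\times A$'' to the intended ``$(r,u)\in K\times A$''.
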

Now we propose a criterion of compactness in $\mathbb{R}_F^c$ as follows 
\begin{theorem} A subset $A$ of $\mathbb{R}_F^c$ is relatively compact \textbf{iff}
\begin{enumerate}
\item [i)] $A$ is bounded in $\mathbb{R}_F^c$,\\
(i.e. there exists a constant $M>0$ such that $D(u,\ 0)\le M$ for all $u\in A$)
\item [ii)] $A$ is level - equicontinuous.\\
(i.e. for every $\varepsilon >0$ there exists $\delta  > 0$ 
such that for $\alpha ,\beta  \in [0,1]$ if $|\alpha  - \beta | < \delta $ then
$\mathop {\sup }\limits_{u \in A} {d_H}({[u]^\alpha },{[u]^\beta }) < \varepsilon$)
\end{enumerate}
\end{theorem}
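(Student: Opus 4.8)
The plan is to reduce the statement to the classical Ascoli--Arzel\`a theorem for $C([0,1],\mathbb{R}^2)$ (Theorem 3.2 above) by representing each fuzzy number in $\mathbb{R}_F^c$ through the pair of endpoint functions of its level sets. For $u\in\mathbb{R}_F^c$ write $[u]^\alpha=[u_\alpha^-,u_\alpha^+]$ and define $\Phi(u)\in C([0,1],\mathbb{R}^2)$ by $\Phi(u)(\alpha)=(u_\alpha^-,u_\alpha^+)$; this lies in $C([0,1],\mathbb{R}^2)$ precisely because the map $\alpha\mapsto[u]^\alpha$ is continuous, which is the defining property of $\mathbb{R}_F^c$. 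Using the elementary identity $d_H([a,b],[c,d])=\max\{|a-c|,|b-d|\}$ for compact intervals and equipping $\mathbb{R}^2$ with the norm $\|(x,y)\|=\max\{|x|,|y|\}$ (equivalent to the Euclidean one, so Theorem 3.2 still applies), one checks immediately that $\|\Phi(u)-\Phi(v)\|_\infty=\sup_{\alpha}d_H([u]^\alpha,[v]^\alpha)=D(u,v)$, so $\Phi$ is an isometry onto its image.

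The key structural step --- and the one I expect to be the main obstacle --- is to show that $\Phi(\mathbb{R}_F^c)$ is a \emph{closed} subset of $C([0,1],\mathbb{R}^2)$. Here I would invoke the Goetschel--Voxman characterization of fuzzy numbers by their endpoint functions: a pair $(f,g)$ of real functions on $[0,1]$ is of the form $(\alpha\mapsto u_\alpha^-,\ \alpha\mapsto u_\alpha^+)$ for some $u\in\mathbb{R}_F$ if and only if $f$ is nondecreasing, $g$ is nonincreasing, $f(1)\le g(1)$, together with the appropriate one-sided continuity, and $u$ moreover lies in $\mathbb{R}_F^c$ exactly when $f$ and $g$ are continuous. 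Since continuity is automatic inside $C([0,1],\mathbb{R}^2)$, this gives $\Phi(\mathbb{R}_F^c)=\{(f,g)\in C([0,1],\mathbb{R}^2): f\nearrow,\ g\searrow,\ f(1)\le g(1)\}$, and monotonicity together with the inequality $f(1)\le g(1)$ are preserved under uniform limits, so this set is closed. Consequently, since $(\mathbb{R}_F^c,D)$ is complete, $A\subset\mathbb{R}_F^c$ is relatively compact in $\mathbb{R}_F^c$ if and only if $\Phi(A)$ is relatively compact in $C([0,1],\mathbb{R}^2)$.

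It then remains to apply Theorem 3.2 to $\Phi(A)$ and translate its two hypotheses back through $\Phi$. Uniform boundedness of $\Phi(A)$ means $\sup\{\|\Phi(u)(\alpha)\|:\alpha\in[0,1],\ u\in A\}<\infty$; since $\|\Phi(u)(\alpha)\|=\max\{|u_\alpha^-|,|u_\alpha^+|\}=d_H([u]^\alpha,[0]^\alpha)$, this is exactly $\sup_{u\in A}D(u,0)<\infty$, i.e. condition (i). Equicontinuity of $\Phi(A)$ means that for every $\varepsilon>0$ there is $\delta>0$ with $\|\Phi(u)(\alpha)-\Phi(u)(\beta)\|<\varepsilon$ whenever $|\alpha-\beta|<\delta$, uniformly in $u\in A$; since $\|\Phi(u)(\alpha)-\Phi(u)(\beta)\|=d_H([u]^\alpha,[u]^\beta)$, this is exactly level-equicontinuity, i.e. condition (ii). Chaining the equivalences yields the theorem.

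The only points demanding care beyond routine bookkeeping are, first, the closedness claim for $\Phi(\mathbb{R}_F^c)$, which relies on the endpoint characterization of fuzzy numbers (a result not recalled in the text and worth stating explicitly), and second, making sure that the identification of ``relatively compact in $\mathbb{R}_F^c$'' with ``relatively compact in $C([0,1],\mathbb{R}^2)$'' genuinely uses that closedness rather than being taken for granted. An alternative route would apply Theorem 3.1 directly with $E=K_c(\mathbb{R})$ and $K=[0,1]$, but then one still has to show that a bounded, level-equicontinuous family has $A(\alpha)=\{[u]^\alpha:u\in A\}$ relatively compact in $K_c(\mathbb{R})$ for each $\alpha$ (which follows since all such intervals lie in a fixed $[-M,M]$) and to identify $\mathbb{R}_F^c$ with a closed subspace of $C([0,1],K_c(\mathbb{R}))$, so the same obstacle reappears in a different guise.
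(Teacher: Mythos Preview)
Your proof is correct and follows the same strategy as the paper: embed $\mathbb{R}_F^c$ isometrically into $C([0,1],\mathbb{R}^2)$ via the endpoint map $u\mapsto(u^-,u^+)$ and then apply the Ascoli--Arzel\`a theorem (Theorem~3.2), translating uniform boundedness and equicontinuity back to boundedness and level-equicontinuity. The only difference is that the paper dismisses the transfer of relative compactness through the embedding with ``it is easy to check,'' while you justify it carefully via closedness of the image using the Goetschel--Voxman characterization; this detour is not actually needed, since the completeness of $(\mathbb{R}_F^c,D)$ recorded in the preliminaries already forces any isometric image to be complete and hence closed.
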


\begin{proof}
Recall that $X \equiv C([0,1],\mathbb{R}^2)$ is a Banach space with respect
to the sup norm by 
\begin{equation*}
\begin{split}
||f|{|_X} &{}= \mathop {\sup }\limits_{\alpha  \in [0,1]} |f(\alpha )|_{{\mathbb{R}^2}} 
				   = \mathop {\sup }\limits_{\alpha  \in [0,1]}\left|({f_1}(\alpha ),{f_2}(\alpha )\right)|_{{\mathbb{R}^2}}\\
 &{} =\mathop {\sup }\limits_{\alpha  \in [0,1]} \max \left\{|{f_1}(\alpha )|, |{f_2}(\alpha )| \right\}\,\,\,\text{for every}\,\,\,f=(f_1,f_2)\in C([0,1],\mathbb{R}^2),
\end{split}
\end{equation*}
and the induced metric from $||.||_X$ is 
\begin{equation*}
{d_X}(f,g) = ||f - g||_X \,\,\,\text{for every}\,\,\,f,g\in X.
\end{equation*}

Next consider the following mapping $j$ 
\begin{equation*}
\fullfunction{j}{\mathbb{R}_F^c} {C([0,1],\mathbb{R}^2)}{u}{j(u)=(u^-,u^+)}
\end{equation*}
by 
$$ j(u)(\alpha)=\left(u^-(\alpha),u^+(\alpha)\right),\,\,\,\forall \alpha \in [0,1],$$
where $u^ \pm (\alpha)$ is the left and right end - points of $\alpha - $
level set $[u]^ \alpha$ i.e $[u]^\alpha=\left[u^-(\alpha),u^+(\alpha)\right]$.\newline
It is easy to check that 
\begin{equation*}
j(u+v)=j(u)+j(v),\,\,\, j(\lambda u)=\lambda j(u),\,\,\,\forall u,v \in 
\mathbb{R}_F^c\,\,\,\text{and}\,\,\,\lambda \ge 0.
\end{equation*}

Further, for every $u,v\in \mathbb{R}_F^c$, we have 
\begin{equation}
\begin{split}
{d_X}(j(u),j(v)) &{}={\left\| {j(u) - j(v)} \right\|_X}={\left\| {({u^ - } - {v^ - },{u^ + } - {v^ + })} \right\|_X}\\
&{} = \mathop {\sup }\limits_{\alpha \in [0,1]} \max \left\{ {|{u^ - }%
(\alpha ) - {v^ - }(\alpha )|,|{u^ + }(\alpha ) - {v^ + }(\alpha )|} \right\}\\
&{}=\mathop {\sup }\limits_{\alpha  \in [0,1]} {d_H}\left( {{{[u]}^\alpha },{{[v]}^\alpha }} \right)= D(u,v).
\end{split}%
\end{equation}
So the mapping $j$ isometrically embeds $\mathbb{R}_F^c$ into $X$.

From the above isometric embedding, it is easy to check that: $A$
is relatively compact in $\mathbb{R}_F^c$ \textbf{iff} $j(A)$ is relatively compact in $X.$

Furthermore, it is not difficult to check that:

\begin{itemize}
\item The equicontinuity of $j(A)$ is just be level - equicontinuity of $A,$

\item The uniform boundedness of $j(A)$ is just be the boundedness of $A$ in $\mathbb{R}_F^c.$
\end{itemize}

Finally, we end the proof by applying Theorem 3.2 for $j(A)$ in $X$.
\end{proof}

\section{\textbf{Fuzzy integral equation}}

We consider the nonlinear fuzzy integral equation of the form 
\begin{equation}
u(t) = {u_0}(t) + \frac{1}{{\Gamma (q)}}\int\limits_0^t {{{(t - s)}^{q - 1}}%
f(s,u(s))\,ds,\,\,\,t\in[0,a]},  \label{eq3.1}
\end{equation}
where $a>0$ and $0<q<1$ are constants given.\newline

We establish the following assumptions:\newline
$(H_1)\,\,\,\,\,u_0 \in C([0,a],{\mathbb{R}_F^c})$,\newline
$(H_2)\,\,\,\,\,f:(0,a] \times \mathbb{R} _F^c \to \mathbb{R}_F^c$ is a
given continuous and compact mapping.\newline

\begin{remark}\rm
It is possible that $f$ has a singular at $t=0.$
\end{remark}
We put
\begin{itemize}
\item $N = {\sup _{t \in [0,a]}}D({u_0}(t), 0).$
\end{itemize}

Then $N$ is well - defined since $u_0([0,a])$ is compact.\newline
Let $R>N$. We put

\begin{itemize}
\item $G = \{ (t,x) \in (0,a] \times \mathbb{R} _F^c:D(x,0) \le R\},$

\item $M = {\sup _{(t,x) \in G}}D(f(t,x),0).$
\end{itemize}

$M$ is well - defined because of compactness of $f.$

Then, we have the following theorem. 
\begin{theorem}
Assume that $({H_1}),({H_2})$ hold, then the fuzzy integral equation (4.1) has at least one solution $u \in C([0,\eta],{\mathbb{R}_F^c})$ where $\eta  = \min \left\{ {a,\,{{\left[ {\frac{{(R - N)\Gamma (q + 1)}}{M}} \right]}^{\frac{1}{q}}}} \right\}$. 
\end{theorem}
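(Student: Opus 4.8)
The plan is to apply Schauder's fixed point theorem (Theorem 2.5) to the operator
\begin{equation*}
Tu(t)=u_0(t)+\frac{1}{\Gamma(q)}\int_0^t(t-s)^{q-1}f(s,u(s))\,ds
\end{equation*}
on the closed ball
\begin{equation*}
B=\bigl\{u\in C([0,\eta],\mathbb{R}_F^c):D(u(t),0)\le R\ \text{ for all }t\in[0,\eta]\bigr\}.
\end{equation*}
First I would record that $B$ is nonempty (it contains the constant map $t\mapsto 0$), bounded, closed (since $u\mapsto D(u(t),0)$ is continuous), and convex, the last using the sublinearity $D(u+w,v+z)\le D(u,v)+D(w,z)$ and $D(\lambda u,\lambda v)=\lambda D(u,v)$ for $\lambda\ge0$. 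For $u\in B$ one has $(s,u(s))\in G$ for every $s\in(0,\eta]$, so $D(f(s,u(s)),0)\le M$ and the integrand is dominated in $D$ by $M(t-s)^{q-1}\in L^1(0,t)$; hence $s\mapsto(t-s)^{q-1}f(s,u(s))$ is (improperly) integrable with value in $\mathbb{R}_F^c$, obtained as the limit of $\int_\varepsilon^{t-\delta}$, which is Cauchy by completeness. Using Theorem 2.4(iv),
\begin{equation*}
D(Tu(t),0)\le D(u_0(t),0)+\frac{1}{\Gamma(q)}\int_0^t(t-s)^{q-1}D(f(s,u(s)),0)\,ds\le N+\frac{M\,t^q}{\Gamma(q+1)}\le N+(R-N)=R
\end{equation*}
by the choice of $\eta$; together with a standard fractional-integral continuity estimate for $t\mapsto Tu(t)$ this gives $T(B)\subseteq B$.

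Next I would prove that $T$ is continuous on $B$ without invoking uniform continuity of $f$. Given $u_n\to u$ in $B$, the set $K=\overline{\{u_n(s):n\in\mathbb{N},\,s\in[0,\eta]\}\cup\{u(s):s\in[0,\eta]\}}$ is compact in $\mathbb{R}_F^c$: it is totally bounded, since $u([0,\eta])$ is compact, for $n$ large $u_n([0,\eta])$ lies in an arbitrarily small neighbourhood of it, and the finitely many remaining $u_n([0,\eta])$ are compact. Fix $\varepsilon>0$ and split $\int_0^t=\int_0^{\varepsilon}+\int_{\varepsilon}^{t}$. By translation invariance $D(Tu_n(t),Tu(t))\le\frac{1}{\Gamma(q)}\int_0^t(t-s)^{q-1}D(f(s,u_n(s)),f(s,u(s)))\,ds$; the part over $[0,\varepsilon]$ is at most $\tfrac{2M\varepsilon^q}{\Gamma(q+1)}$ uniformly in $t$ and $n$, while on the compact set $[\varepsilon,\eta]\times K$ the map $f$ is uniformly continuous, so $D(f(s,u_n(s)),f(s,u(s)))\to0$ uniformly in $s\in[\varepsilon,\eta]$ and the part over $[\varepsilon,t]$ tends to $0$. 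Hence $\sup_{t\in[0,\eta]}D(Tu_n(t),Tu(t))\to0$.

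Then I would show that $T(B)$ is relatively compact in $C([0,\eta],\mathbb{R}_F^c)$ via the Ascoli-Arzela Theorem 3.1 with $E=\mathbb{R}_F^c$ and $K=[0,\eta]$. For equicontinuity in $t$, assuming $t_1<t_2$ and using translation invariance, the identity $D(\lambda w,\mu w)=|\lambda-\mu|D(w,0)$ and Theorem 2.4(iv),
\begin{equation*}
D(Tu(t_1),Tu(t_2))\le D(u_0(t_1),u_0(t_2))+\frac{M}{\Gamma(q+1)}\bigl(t_1^q-t_2^q+2(t_2-t_1)^q\bigr)\longrightarrow0\quad\text{as }|t_1-t_2|\to0,
\end{equation*}
uniformly in $u\in B$ by uniform continuity of $u_0$ and of $t\mapsto t^q$ on $[0,\eta]$. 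The pointwise relative compactness of $T(B)(t)=\{Tu(t):u\in B\}$ in $\mathbb{R}_F^c$ is exactly where the criterion of Section 3 enters: by Theorem 3.3 it suffices to check that $T(B)(t)$ is bounded, which is the inequality $D(Tu(t),0)\le R$ above, and level-equicontinuous. For the latter, $u_0([0,\eta])$ and $\overline{f(G)}$ are compact subsets of $\mathbb{R}_F^c$ (the second because $f$ is compact and $G$ is bounded), hence level-equicontinuous by the other implication of Theorem 3.3; combining this with $[Tu(t)]^\alpha=[u_0(t)]^\alpha+\frac{1}{\Gamma(q)}\int_0^t(t-s)^{q-1}[f(s,u(s))]^\alpha\,ds$, the subadditivity of $d_H$ under set addition, and the endpoint representation of the Aumann integral yields
\begin{equation*}
d_H\bigl([Tu(t)]^\alpha,[Tu(t)]^\beta\bigr)\le d_H\bigl([u_0(t)]^\alpha,[u_0(t)]^\beta\bigr)+\frac{1}{\Gamma(q)}\int_0^t(t-s)^{q-1}\sup_{v\in f(G)}d_H\bigl([v]^\alpha,[v]^\beta\bigr)\,ds,
\end{equation*}
which can be made $<\varepsilon$ uniformly in $u\in B$ and $t\in[0,\eta]$ once $|\alpha-\beta|$ is small enough. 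Thus $T(B)(t)$ is relatively compact for each $t$, and Theorem 3.1 applies.

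Finally, $T:B\to B$ is then a compact operator in the sense of Remark 2.6, and $B$ is a nonempty closed bounded convex subset of the semilinear Banach space $C([0,\eta],\mathbb{R}_F^c)$, which has the cancellation property, so Theorem 2.5 yields a fixed point $u\in B$, i.e.\ a solution of \eqref{eq3.1} on $[0,\eta]$. The step I expect to be the main obstacle is the continuity of $T$: because $f$ is only continuous (not uniformly) and may be singular at $t=0$, one has to combine the compactness of the orbit $K$ of the convergent sequence with the splitting of the fractional integral near $s=0$; by contrast, the relative compactness of $T(B)$ reduces, thanks to Theorem 3.3, to the two routine Ascoli-Arzela conditions plus elementary fractional-integral estimates, and the ``level-equicontinuity'' is inherited for free from the compactness of $f$ and of $u_0([0,\eta])$, which is precisely what repairs the gaps pointed out in the Introduction.
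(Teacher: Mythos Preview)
Your proposal is correct and follows the same overall scheme as the paper: apply Theorem~2.5 on the closed ball $B=\Omega$, verify $T(B)\subseteq B$ by the bound $N+\tfrac{M\eta^q}{\Gamma(q+1)}\le R$, obtain equicontinuity in $t$ from the fractional--integral estimate, and get pointwise relative compactness of $T(B)(t)$ via Theorem~3.3 (boundedness plus level--equicontinuity inherited from the compact sets $u_0([0,\eta])$ and $\overline{f(G)}$). The paper proves the corresponding properties for $\mathcal{A}$ instead of $T$ and then translates by $u_0$, but the substance is the same.

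The one genuine difference is the continuity argument. The paper does \emph{not} build the compact orbit set $K$; instead it splits the convolution near the kernel singularity $s=t$: for $t>\varepsilon_0$ it writes $\int_0^t=\int_0^{t-\varepsilon_0/2}+\int_{t-\varepsilon_0/2}^t$, bounds the second piece by $\tfrac{2M}{q}(\varepsilon_0/2)^q$, and on the first piece uses $(t-s)^{q-1}\le(\varepsilon_0/2)^{q-1}$ together with Lebesgue's DCT applied to $I(\eta)=\int_0^\eta D\bigl(f(s,u_n(s)),f(s,u(s))\bigr)\,ds$ (pointwise continuity of $f$ on $(0,a]\times\mathbb{R}_F^c$ plus the constant dominator $2M$). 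Your route splits near $s=0$ and replaces the DCT step by uniform continuity of $f$ on the compact set $[\varepsilon,\eta]\times K$; this is equally valid since the kernel singularity at $s=t$ is integrable and $\int_\varepsilon^t(t-s)^{q-1}\,ds\le\eta^q/q$ uniformly in $t$. What your approach buys is a transparent reason why no uniform--continuity \emph{hypothesis} on $f$ is needed (it comes for free on compacta), at the price of constructing $K$; what the paper's approach buys is that no auxiliary compact orbit is needed, only pointwise continuity and a bound, so the argument stays closer to a pure DCT computation.
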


\begin{proof}
For $\eta \in (0,a]$, we put 
\begin{equation*}
X:=C([0,\eta],\mathbb{R}_F^c).
\end{equation*}
Then it is not difficult to check that $X$ is a semilinear complete metric
space with respect to the metric 
\begin{equation*}
{D_X}(u,v) = \mathop {\sup }\limits_{t \in [0,\eta ]} D(u(t),v(t)) \,\,\, 
\text{for every}\,\,\, u,v\in X.
\end{equation*}
We define the set 
\begin{equation*}
\Omega = \left\{ {u \in X\mid D_X(u, 0) \le R} \right\},
\end{equation*}
and 
\begin{equation*}
B_R= \{ x \in \mathbb{R} _F^c:D(x, 0) \le R\}.
\end{equation*}
It is easy to see that $\Omega$ is a closed, bounded and convex subset of
the semilinear Banach space $X$. On the set $\Omega$, we define the operator 
\begin{equation*}
\fullfunction{\mathcal{T}}{\Omega} {X}{u}{\mathcal{T}u}
\end{equation*}
by 
\begin{equation*}
\mathcal{T}u = {u_0} + \frac{1}{{\Gamma (q)}}\mathcal{A}u,
\end{equation*}
where 
\begin{equation*}
\fullfunction{\mathcal{A}}{\Omega} {X}{u}{\mathcal{A}u}
\end{equation*}
by 
\begin{equation*}
\mathcal{A}u(t) = \int\limits_0^t {{(t - s)}^{q - 1}}f(s,u(s))\,ds\,\,\, 
\text{for every}\,\,\, t\in [0,\eta].
\end{equation*}
We claim that the operator $\mathcal{T}$ is continuous and compact. It is
sufficient to show that $\mathcal{A}$ is continuous and compact.\newline

The proof consists of five steps.\newline
\textit{\textbf{\underline{Step 1.}}} For every $u\in\Omega,\,\,\,t \in
[0,a] $, we claim that $\mathcal{A}u(t) \in \mathbb{R}_F^c$. Indeed, it is
known that $\mathcal{A}u(t)\in \mathbb{R}_F$. Next, we prove $\left( {{\mathcal{A}}u(t)} \right)^\pm(\alpha)$ is continuous
 in $\alpha \in [0,1]$.\newline
Assumme that $\{\alpha_n\}$ is a sequence in $[0,1]$ converging to $\alpha$. Then, we have

\begin{itemize}
\item ${(t - s)^{q - 1}}{\left( {f(s,u(s))} \right)^ \pm }(\alpha _n) \to 
{(t - s)^{q - 1}}{\left( {f(s,u(s))} \right)^ \pm }(\alpha )$ as $n\to\infty$, for every $s\in [0,t],$

\item $\left| {{{(t - s)}^{q - 1}}{{\left( {f(s,u(s))} \right)}^ \pm }({\alpha _n})} \right| 
\le {(t - s)^{q - 1}}D(f(s,u(s)),0) \le M{(t - s)^{q - 1}}$, for every $s\in [0,t], \,\,n \in \mathbb{N}.$
\end{itemize}
By Appling Lebesgue's Dominated Convergence Theorem, we obtain that 
\begin{equation*}
\left( {{\mathcal{A}}u(t)} \right)^\pm(\alpha_n) \to \left( {{\mathcal{A}}u(t)} \right)^\pm(\alpha),\,\,\,\text{as}\,\,\, n\to \infty.
\end{equation*}
This implies that $\mathcal{A}u(t) \in \mathbb{R}_F^c$.\newline
\textit{\textbf{\underline{Step 2.}}} For fixed $u\in \Omega$, we claim that 
$\mathcal{A}u\in X.$ In fact, we will prove that $\mathcal{A}u$ is uniformly
continuous on $[0,\eta]$. Indeed, for ${t_1},{t_2} \in [0,\eta],\,\,\,{t_1}
< {t_2}$. We have 
\begin{equation}
\begin{split}
D(\mathcal{A}u({t_1}),& {}\mathcal{A}u({t_2}))\\
& {}= D\left( {\int\limits_0^{{t_2}%
} {{{({t_2} - s)}^{q - 1}}f(s,u(s)){\mkern 1mu} ds} ,\int\limits_0^{{t_1}} {{%
{({t_1} - s)}^{q - 1}}f(s,u(s)){\mkern 1mu} ds} } \right) \\
& {} \le D\left( {\int\limits_0^{{t_1}} {{{({t_2} - s)}^{q - 1}}f(s,u(s)){%
\mkern 1mu} ds} ,\int\limits_0^{{t_1}} {{{({t_1} - s)}^{q - 1}}f(s,u(s)){%
\mkern 1mu} ds} } \right) \\
& {}\,\,\,\,+ D\left( {\int\limits_{{t_1}}^{{t_2}} {{{({t_2} - s)}^{q - 1}}%
f(s,u(s))\,ds} ,0} \right) \\
& {} \le \int\limits_0^{{t_1}} {[{{({t_1} - s)}^{q - 1}} - {{({t_2} - s)}^{q
- 1}}]{\mkern 1mu} D\left( {f(s,u(s)), 0} \right)ds} \\
& {}\,\,\,\,+ \int\limits_{{t_1}}^{{t_2}} {{{({t_2} - s)}^{q - 1}}D\left( {%
f(s,u(s)),0} \right)ds} \\
& {} \le M\left\{ {\int\limits_0^{{t_1}} {[{{({t_1} - s)}^{q - 1}} - {{({t_2}
- s)}^{q - 1}}]{\mkern 1mu} ds} + \int\limits_{{t_1}}^{{t_2}} {{{({t_2} - s)}%
^{q - 1}}ds} } \right\} \\
& {}\le \frac{M}{q }[2{({t_2} - {t_1})^q} + (t_1^q - t_2^q)] \le \frac{{2M}}{%
q }{({t_2} - {t_1})^q}.
\end{split}%
\end{equation}
The above estimates shows the uniform continuity of $\mathcal{A}u$ on $%
[0,\eta].$\newline
\textit{\textbf{\underline{Step 3.}}} We claim that $\mathcal{T}u \in \Omega$%
, for every $u \in \Omega$. Indeed, for $t \in [0,\eta]$ we have 
\begin{equation}
\begin{aligned} D(\mathcal{T}u(t),0 ) & \le D({u_0}(t),0) + \frac{1}{{\Gamma
(q)}}\int\limits_0^t {{{(t - s)}^{q - 1}}D(f(s,u(s)),0) ds}\\ & {} \le N +
\frac{M}{{\Gamma (q)}}\int\limits_0^t {{{(t - s)}^{q - 1}}\,ds}\\ &{}\le N +
\frac{{M{\eta ^q}}}{{\Gamma (q + 1)}}\le R,\,\,\,\, \text{for every}\,\,\,\,
u \in \Omega. \end{aligned}
\end{equation}
This shows that $D_X (\mathcal{T}u,0 ) \le R,$ for every $u \in \Omega.$\newline
Thus, $\mathcal{T}$ maps the set $\Omega$ to itself.\newline
\textit{\textbf{\underline{Step 4.}}} Let $u,{u_n} \in \Omega :{u_n} \to u$.
We claim that $\mathcal{A}{u_n} \to \mathcal{A}u.$\newline
For $\varepsilon > 0$ arbitrarily given and small enough such that ${%
\varepsilon _0} := {\left( {\frac{{q\varepsilon }}{{2M}}} \right)^{\frac{1}{q%
}}} \in (0,\eta]$.\newline
For any $t \in (0,\eta].$ There are two cases\newline
\underline{Case 1.} $t \in (0, \varepsilon_0]$ we have 
\begin{equation}
\begin{split}
D(\mathcal{A}u_n(t),&{}\mathcal{A}u(t))\\
&{}= D\left( {\int\limits_0^t {{{(t - s)}^{q - 1}}f(s,{u_n}(s)) ds} ,\int\limits_0^t {{{(t - s)}^{q - 1}}f(s,u(s)) ds}} \right) \\
& {} \le \int\limits_0^t {{{(t - s)}^{q - 1}}D\left( {f(s,{u_n}(s)),f(s,u(s))%
} \right) ds} \\
& {} \le 2M\int\limits_0^t {{{(t - s)}^{q - 1}} ds} \\
& {} \le \frac{{2M}}{q}\varepsilon _0^q = \varepsilon.
\end{split}%
\end{equation}
\underline{Case 2.} $t \in (\varepsilon_0,\eta]$, we have 
\begin{equation}
\begin{split}
D(\mathcal{A}u_n(t), \mathcal{A}u(t))& \le \int\limits_0^t {{{(t - s)}^{q -
1}}D\left( {f(s,{u_n}(s)),f(s,u(s))} \right) ds} \\
& {} \le \int\limits_0^{t - \frac{{\varepsilon _0}}{2}} {{{(t - s)}^{q - 1}}%
D\left( {f(s,{u_n}(s)),f(s,u(s))} \right)\,ds} \\
& {}+ \int\limits_{t - \frac{{\varepsilon _0}}{2}}^t {{{(t - s)}^{q - 1}}%
D\left( {f(s,{u_n}(s)),f(s,u(s))} \right)\,ds} \\
& {} \le {\left( {\frac{{\varepsilon _0}}{2}} \right)^{q - 1}}I(\eta ) + 
\frac{{2M\varepsilon _0^q}}{{q{2^q}}},
\end{split}%
\end{equation}
where 
\begin{equation*}
I(\eta)=\int\limits_0^\eta {D\left( {f(s,{u_n}(s)),f(s,u(s))} \right) ds}.
\end{equation*}
From Lebesgue's the Dominated Convergence Theorem, we get $\mathop {\lim }%
\limits_{n \to \infty } I(\eta ) = 0$. This result shows that there is $%
n_{\varepsilon}\in \mathbb{N}$ such that $I(\eta ) \le \frac{{M\varepsilon_0 
}}{q}({2^q} - 1)$ whenever $n\geq n_{\varepsilon}$.\newline
Thus 
\begin{equation}
\begin{split}
D(\mathcal{A}u_n(t),\mathcal{A}u(t))&{}\le {\left( {\frac{{\varepsilon _0}}{2}} \right)^{q - 1}}\frac{{M{\varepsilon _0}}}{q}({2^q} - 1) + \frac{{2M\varepsilon _0^q}}{{q{2^q}}}\\
 &{}\le \frac{{2M\varepsilon _0^q}}{q} =\varepsilon,\,\,\,\text{whenever}\,\,\, n\geq n_{\varepsilon}.
\end{split}
\end{equation}
By combining (4.4)--(4.6), it is easy to deduce that 
\begin{equation*}
D_X(\mathcal{A}u_n, \mathcal{A}u)\le \varepsilon, \,\,\,\text{whenever}\,\,\, n\geq n_{\varepsilon}.
\end{equation*}
This means that $\mathcal{A}{u_n} \to \mathcal{A}u$ in $X$ as $n\to \infty$.%
\newline
\textit{\textbf{\underline{Step 5.}}} Now we claim that $\mathcal{A}(\Omega
) $ is relatively compact in $X$. Indeed, first we observe that the
equicontinuity of $\mathcal{A}(\Omega)$ is a consequence of estimate (4.2).
Therefore by using Theorem 3.1, it remains to prove that: For every $t\in
[0,\eta]$, $\mathcal{A}(\Omega)(t)$ is relatively compact in $\mathbb{R}%
_F^c$.

From the same arguments as estimate (4.3), it is easy to check that the
boundedness of $\mathcal{A}(\Omega)(t)$ holds for every $t\in [0,\eta]$. By
using Theorem 3.3 we only need to prove that $\mathcal{A}(\Omega)(t)$ is
level - equicontinuous, for every $t\in [0,\eta]$.

It follows from the hypothesis $(H_2)$ that $f((0,\eta ] \times B_R)$ is
relatively compact in $\mathbb{R}_F^c$. Also by using Theorem 3.3 we deduce $%
f((0,\eta ] \times B_R)$ is level-equicontinuous. Therefore for every $%
\varepsilon > 0$ there exists $\delta > 0$ such that for all $%
\alpha,\,\,\,\beta \in [0,1]$. If  $|\alpha - \beta | < \delta$ then
\begin{equation*}
 D\left( {{{\left[ {f(s,u(s))} \right]}%
^\alpha },{{\left[ {f(s,u(s))} \right]}^\beta }} \right) < \frac{{q
\varepsilon }}{{\eta^q}},\;\;\,{\text{for every}}\,\,(s,u) \in (0,\eta ]
\times \Omega.
\end{equation*}
Hence, for every $u\in \Omega$, $t\in [0,\eta]$ we get 
\begin{equation*}
\begin{split}
D({[{\mathcal{A}}u(t)]^\alpha },{[{\mathcal{A}}u(t)]^\beta })&{}\le
\int\limits_0^t {{{(t - s)}^{q - 1}}} D\left( {{{[f(s,u(s))]}^\alpha },{{%
[f(s,u(s))]}^\beta }} \right)ds \\
& {}< \frac{{q\varepsilon }}{{\eta ^q}}\int\limits_0^t {{{(t - s)}^{q - 1}}} 
{\mkern 1mu} ds \le \varepsilon,{\text{whenever}} \;\;\,|\alpha - \beta | <
\delta.
\end{split}%
\end{equation*}
This implies that $\mathcal{A}(\Omega)(t)$ is level-equicontinuous for every 
$t\in [0,\eta].$\newline

So $\mathcal{A}(\Omega)$ is relatively compact in $X$, i.e, $\mathcal{A}$ is a
compact operator, we deduce that $\mathcal{T}$ is too.\newline

The results obtained from the above steps allow us to conclude that $%
\mathcal{T}$ is continuous and compact. By Theorem 2.5 $\mathcal{T}$ has a
fixed point in $\Omega$ i.e, the integral equation (4.1) has a solution.
\end{proof}

\section{\textbf{Generalized problem}}

The solvability of the problem (4.1) still holds if the space $\mathbb{R}%
_{F}^{c}$ is replaced by $\mathbb{E}_{c}^{n}$. Indeed the whole proof is
completely similar by using a criterion of compactness for the space $
\mathbb{E}_{c}^{n}$ which we propose as follows. 
\begin{theorem}
A subset $\mathcal{B}$ of $\mathbb{E}_c^n$ is  relatively compact  \textbf{iff}
\begin{enumerate}
\item [i)] $\mathcal{B}$ is bounded in $\mathbb{E}_c^n$,
\item [ii)] $\mathcal{B}$ is level - equicontinuous.
\end{enumerate}
\end{theorem}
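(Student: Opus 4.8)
The plan is to imitate the proof of Theorem 3.3, replacing the endpoint embedding $u\mapsto(u^-,u^+)$ — which is special to dimension one, where level sets are intervals — by the Hörmander support-function embedding, which turns an arbitrary element of $K_c(\mathbb{R}^n)$ into a continuous function on the unit sphere. Write $\mathbb{S}^{n-1}=\{p\in\mathbb{R}^n:|p|=1\}$ and, for $A\in K_c(\mathbb{R}^n)$, let $\sigma_A(p)=\sup_{a\in A}\langle a,p\rangle$ be its support function; recall that $\sigma_{A+B}=\sigma_A+\sigma_B$, $\sigma_{\lambda A}=\lambda\sigma_A$ for $\lambda\ge 0$, that $\sigma_A$ is $\|A\|$-Lipschitz on $\mathbb{S}^{n-1}$ with $\|A\|:=\sup_{a\in A}|a|=d_H(A,\{0\})$, and that $d_H(A,B)=\sup_{p\in\mathbb{S}^{n-1}}|\sigma_A(p)-\sigma_B(p)|$. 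Put $K:=[0,1]\times\mathbb{S}^{n-1}$, a compact metric space, and $Y:=C(K,\mathbb{R})$ with the sup-norm, and define
$$
J:\mathbb{E}_c^n\to Y,\qquad J(u)(\alpha,p)=\sigma_{[u]^\alpha}(p).
$$

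First I would check that $J$ is well defined and is an isometric embedding (it is moreover additive and positively homogeneous, as in Theorem 3.3, though that is not needed here). For $u\in\mathbb{E}_c^n$ the map $\alpha\mapsto[u]^\alpha$ is $d_H$-continuous on the compact interval $[0,1]$, so $C_u:=\sup_{\alpha}\|[u]^\alpha\|<\infty$, and
$$
|J(u)(\alpha,p)-J(u)(\beta,p')|\le\|[u]^\alpha\|\,|p-p'|+d_H([u]^\alpha,[u]^\beta)\le C_u|p-p'|+d_H([u]^\alpha,[u]^\beta)\to 0
$$
as $(\beta,p')\to(\alpha,p)$, so $J(u)\in Y$. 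Moreover, for $u,v\in\mathbb{E}_c^n$,
$$
\|J(u)-J(v)\|_Y=\sup_{\alpha\in[0,1]}\ \sup_{p\in\mathbb{S}^{n-1}}|\sigma_{[u]^\alpha}(p)-\sigma_{[v]^\alpha}(p)|=\sup_{\alpha\in[0,1]}d_H([u]^\alpha,[v]^\alpha)=D(u,v),
$$
so $J$ is an isometric embedding (for $n=1$ one has $\mathbb{S}^0=\{\pm1\}$, $C(\mathbb{S}^0,\mathbb{R})\cong\mathbb{R}^2$, and $J$ reduces to the map $j$ of Theorem 3.3). Since $(\mathbb{E}_c^n,D)$ is complete, $J(\mathbb{E}_c^n)$ is closed in $Y$, and hence $\mathcal{B}$ is relatively compact in $\mathbb{E}_c^n$ \textbf{iff} $J(\mathcal{B})$ is relatively compact in $Y$.

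Next I would translate conditions (i) and (ii) through $J$ and invoke Ascoli–Arzela. Taking $v=0$ in the isometry identity gives $\|J(u)\|_Y=D(u,0)$, so uniform boundedness of $J(\mathcal{B})$ is exactly (i). For the equicontinuity, one direction is immediate: equicontinuity of $J(\mathcal{B})$ on the compact $K$ is uniform equicontinuity, and restricting to $p'=p$ yields, for each $\varepsilon>0$, a $\delta>0$ with $\sup_{u\in\mathcal{B}}d_H([u]^\alpha,[u]^\beta)=\sup_u\sup_p|\sigma_{[u]^\alpha}(p)-\sigma_{[u]^\beta}(p)|\le\varepsilon$ whenever $|\alpha-\beta|<\delta$, i.e. (ii). Conversely, assume (i) with bound $M$ and (ii); then for $(\alpha,p),(\beta,p')\in K$,
$$
|J(u)(\alpha,p)-J(u)(\beta,p')|\le\|[u]^\alpha\|\,|p-p'|+d_H([u]^\alpha,[u]^\beta)\le M|p-p'|+d_H([u]^\alpha,[u]^\beta),
$$
and choosing $\delta=\min\{\delta_1,\ \varepsilon/(2M)\}$ (with $\delta=\delta_1$ if $M=0$), where $\delta_1$ comes from (ii) applied to $\varepsilon/2$, makes the right side $<\varepsilon$ uniformly in $u$; so $J(\mathcal{B})$ is equicontinuous. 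Applying Theorem 3.2 with $m=1$ to $J(\mathcal{B})\subset C(K,\mathbb{R})$ then finishes the argument: $J(\mathcal{B})$ is relatively compact \textbf{iff} it is uniformly bounded and equicontinuous \textbf{iff} (i) and (ii) hold.

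The only genuinely new ingredient compared with Theorem 3.3 — and the point I expect to require the most care — is the passage through the support-function embedding: in dimension $n\ge2$ the level sets are arbitrary convex bodies rather than intervals, so one must work in $C([0,1]\times\mathbb{S}^{n-1},\mathbb{R})$ rather than $C([0,1],\mathbb{R}^2)$, and must observe that the boundedness of $\mathcal{B}$ automatically supplies equicontinuity of $J(\mathcal{B})$ in the $\mathbb{S}^{n-1}$-direction (via the uniform Lipschitz bound on support functions), so that level-equicontinuity — which controls only the $\alpha$-direction — still suffices. An alternative route, avoiding support functions, is to embed $\mathbb{E}_c^n$ isometrically into $C([0,1],K_c(\mathbb{R}^n))$ by $u\mapsto(\alpha\mapsto[u]^\alpha)$ and apply Theorem 3.1 with $E=K_c(\mathbb{R}^n)$, using the Blaschke selection theorem to see that the bounded family $\{[u]^\alpha:u\in\mathcal{B}\}$ is relatively compact in $(K_c(\mathbb{R}^n),d_H)$ for each fixed $\alpha$; but the support-function route keeps everything scalar-valued and reuses Theorem 3.2 verbatim, exactly as in the proof of Theorem 3.3.
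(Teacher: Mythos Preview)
Your argument is correct, but it follows a different route from the paper's. You isometrically embed $\mathbb{E}_c^n$ into $C([0,1]\times\mathbb{S}^{n-1},\mathbb{R})$ via support functions and then invoke the scalar-valued Ascoli--Arzel\`a theorem (Theorem~3.2), using the uniform Lipschitz bound on support functions to show that boundedness already controls equicontinuity in the sphere variable. The paper instead takes exactly the ``alternative route'' you sketch at the end: it views $\mathbb{E}_c^n$ as a closed subspace of $C([0,1],K_c(\mathbb{R}^n))$, applies the general Ascoli--Arzel\`a theorem (Theorem~3.1) with $E=K_c(\mathbb{R}^n)$, and reduces the pointwise condition ``$\mathcal{B}(\alpha)$ relatively compact in $K_c(\mathbb{R}^n)$'' to boundedness via the Blaschke selection theorem (stated as Lemma~5.2, with the equivalence packaged as Lemma~5.3). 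Your approach stays scalar-valued and parallels the one-dimensional proof of Theorem~3.3 more closely, at the price of importing the support-function machinery; the paper's approach is shorter and more structural, but leans on the Blaschke selection theorem as a black box. Both are standard and either would be acceptable here.
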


\begin{proof}
First we need the following lemmas 
\begin{lemma} (see \cite{LG})
A subset $\Gamma$ of $K_c(\mathbb{R}^n)$ is relatively compact \textbf{iff} $\Gamma$ is bounded.
\end{lemma}%
\begin{lemma}
Let $\mathcal{B}$ be a subset of $\mathbb{E}_c^n$. Put  $\mathcal{B}(\alpha)=\{[u]^\alpha | u\in \mathcal{B} \}$ for every
$\alpha\in [0,1]$. Then the following satatements are equivalent
\begin{enumerate}
\item [i)] for every $\alpha\in [0,1], \mathcal{B}(\alpha)$ is relatively compact in $K_c(\mathbb{R}^n)$, 
\item [ii)]$\mathcal{B}$ is bounded in $\mathbb{E}_c^n.$
\end{enumerate}
\end{lemma}

\begin{proof} Proof of Lemma 5.3\newline
i) $\Rightarrow $ ii) By the Lemma 5.2, $\mathcal{B}(0)$ is bounded in $K_{c}(%
\mathbb{R}^{n})$, this means that there is $M>0$ such that 
\begin{equation*}
\begin{split}
& {}\,\,\,\,\,\,\,\,\,\, d_{H}([u]^{0},0)\leq M,\,\,\forall u\in \mathcal{B} \\
& {}\Rightarrow d_{H}([u]^{\alpha },0)\leq M,\,\,\forall \alpha \in \lbrack
0,1],\,\,\forall u\in \mathcal{B} \\
& {}\Rightarrow D(u,0)\leq M,\,\,\forall u\in \mathcal{B}.
\end{split}%
\end{equation*}%
This implies that $\mathcal{B}$ is bounded in $\mathbb{E}_{c}^{n}.$\newline

ii) $\Rightarrow $ i) It follows from the boundedness of $\mathcal{B}$ that
there is $M>0$ such that%
\begin{equation*}
\begin{split}
& {}\,\,\,\,\,\,\,\,\,\, D(u,0)\leq M,\,\,\forall u\in \mathcal{B}\\
& {}\Rightarrow d_{H}([u]^{\alpha },0)\leq M,\,\,\forall \alpha \in \lbrack
0,1],\,\,\forall u\in \mathcal{B}\\
& {}\Rightarrow d_{H}(x,0)\leq M,\,\,\,\forall x\in \mathcal{B}(\alpha),\,\,\,\forall \alpha \in \lbrack
0,1].
\end{split}%
\end{equation*}%
This implies that $\mathcal{B}(\alpha )$ is bounded in $K_{c}(\mathbb{R}^{n})$,
for every $\alpha \in \lbrack 0,1].$

Using Lemma 5.2 we end the proof of Lemma 5.3.
\end{proof}

Now we are in position to prove Theorem 5.1. It is easy to see that $\mathbb{%
E}_{c}^{n}$ is a closed subspace of the metric space $C([0,1],K_{c}(\mathbb{R%
}^{n}))$ with respect to the metric 
\begin{equation*}
D(u,v)=\mathop {\sup }\limits_{\alpha \in \lbrack 0,1]}{d_{H}}({u(\alpha )},{%
v(\alpha )}).
\end{equation*}%
By using Theorem 3.1 we obtain the following result:\newline
$\mathcal{B} \subset \mathbb{E}_{c}^{n}$ is relatively compact in $C([0,1],K_{c}(%
\mathbb{R}^{n}))$ (i.e. in $\mathbb{E}_{c}^{n}$ because of the closedness of $\mathbb{E}_c^n$)
\textbf{iff}
\begin{enumerate}
\item[a)] for every $\alpha\in [0,1],\,\,\mathcal{B}(\alpha)$ is relatively compact in $K_c(\mathbb{R}^n)$,

\item[b)] $\mathcal{B}$ is equicontinuous (just be level - equicontinuous).
\end{enumerate}

Using the Lemma 5.3 we see that the part a) is equivalent to the part i) of
Theorem 5.1 and we end the proof of  Theorem 5.1.
\end{proof}

\begin{remark}\rm
By the same technique solving fuzzy integral equation (4.1), we can deal with a number of fuzzy 
fractional differential equations. 
\end{remark}

\end{document}